\documentclass[A4, 12pt]{amsart}
\usepackage{amssymb, amsmath, color, tikz, hyperref, cleveref}
\usepackage[T1]{fontenc}
\usepackage{mathptmx}
\usepackage{todonotes, comment}
\usepackage{enumerate}
\usepackage{array}
\usepackage{multirow}
\usepackage{graphicx, subcaption}
\usepackage[margin=2cm]{geometry}
\usepackage[inline]{enumitem}

\newcommand\captimes{\mathbin{\ooalign{$\cap$\cr%
   \hfil\raise0.42ex\hbox{$\scriptscriptstyle\times$}\hfil\cr}}}
\newcommand\bigcaptimes{\mathop{\ooalign{$\bigcap$\cr%
 \hfil\raise0.36ex\hbox{$\scriptscriptstyle\boldsymbol{\times}$}\hfil\cr}}}

\newtheorem{theorem}{Theorem}
\numberwithin{theorem}{section}

\newtheorem{lemma}[theorem]{Lemma}
\newtheorem{corollary}[theorem]{Corollary}

\theoremstyle{definition}

\newtheorem{remark}[theorem]{Remark}

\crefname{question}{question}{questions}

\title{Fractional discrete Helly for pairs in a family of boxes}
\author{Taehyun Eom \and Minki Kim \and  Eon Lee}
\address[Taehyun Eom]{R\&BD Foundations, Chonnam National University, Gwangju, South Korea}
\email{taeheom@chonnam.ac.kr}
\address[Minki Kim]{Department of Mathematical Sciences, Gwangju Institute of Science and Technology, Gwangju, South Korea}
\email{minkikim@gist.ac.kr}
\address[Eon Lee]{School of Electrical Engineering and
Computer Science, Gwangju Institute of Science and Technology, Gwangju, South Korea}
\email{eonlee27@gm.gist.ac.kr}

\date{\today}

\begin{document}
\begin{abstract}
    Given a point set $S$ in $\mathbb{R}^d$, a family of sets is $S$-intersecting if its members have a point in common in $S$.
    Recently, Edwards and Sober\'{o}n proved a fractional version of Halman's theorem for axis-parallel boxes, showing that every finite family $F$ of axis-parallel boxes in $\mathbb{R}^d$ with positive density of $S$-intersecting $(d+1)$-tuples contains an $S$-intersecting subfamily of size linear in $|F|$.
    We prove that qualitatively the same conclusion can be achieved if the density of $S$-intersecting pairs is sufficiently large.
\end{abstract}

\thanks{Corresponding author: Minki Kim (\texttt{minkikim@gist.ac.kr}).}

\subjclass[2020]{52A35}

\keywords{fractional S-Helly property, axis-parallel boxes}

\maketitle
    
\section{Introduction}
Throughout this paper, we consider closed boxes only.
It was shown by Eckhoff~\cite{Eck88} that for each positive integer $d$ and real number $\alpha \in (1-1/d,1]$, every finite family $F$ of axis-parallel boxes with at least $\alpha\binom{|F|}{2}$ many intersecting pairs contains an intersecting subfamily of size at least $(1-\sqrt{d(1-\alpha)})|F|$.
We establish a discrete version of Eckhoff's result.

Given a nonempty set $S$ and a family $F$ of nonempty sets, we say $F$ is {\em $S$-intersecting} if the intersection of all members of $F$ meets $S$.
Our main theorem states the following:
\begin{theorem}\label{thm:main}
    For every positive integer $d$, there is a number $c_d\in(0,1)$ and a function $\beta_d:(c_d,1]\to(0,1]$ such that the following holds:
    for every $S \subset\mathbb{R}^d$, $\alpha\in(c_d,1]$, and finite family $F$ of axis-parallel boxes in $\mathbb{R}^d$, if there are at least $\alpha\binom{|F|}{2}$ of $S$-intersecting pairs of $F$, then $F$ contains an $S$-intersecting subfamily of size at least $\beta_d(\alpha)|F|$.
\end{theorem}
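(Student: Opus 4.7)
My plan is to apply Eckhoff's theorem (quoted in the introduction) to extract a subfamily with common continuous intersection, and then to analyse the case where this intersection avoids $S$ via a pigeonhole on the ``signed type'' of witnesses, reducing the problem to a lower-dimensional variant whose base case is the one-dimensional version of \Cref{thm:main}.

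Write $n=|F|$. Since $S$-intersecting pairs are in particular intersecting, Eckhoff's theorem applies for $\alpha > 1-1/d$ and produces $F_0 \subseteq F$ with $m := |F_0| \geq (1-\sqrt{d(1-\alpha)})\,n$ and $B_0 := \bigcap F_0 \neq \emptyset$. If $B_0 \cap S \neq \emptyset$, output $F_0$ and we are done. Otherwise, a direct count shows that $F_0$ still contains at least $(\alpha - 2\sqrt{d(1-\alpha)})\binom{n}{2}$ $S$-intersecting pairs, a positive fraction of $\binom{m}{2}$ when $\alpha$ is close to $1$. For each such pair, fix a witness $s \in S$ and record its signed type $\sigma(s) \in \{-,0,+\}^d$ coordinatewise, according to whether $\pi_k(s)$ lies inside $\pi_k(B_0)$, above $\max\pi_k(B_0)$, or below $\min\pi_k(B_0)$. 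Since $s \notin B_0$, $\sigma(s)\neq 0$, so pigeonholing over the $3^d-1$ nonzero types isolates a type $\sigma$ accounting for at least a $1/(3^d-1)$ fraction of the pairs.

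After reflecting coordinates, I may assume $\sigma$ has $+$ in coordinates $1,\ldots,t$ and $0$ elsewhere, for some $t \in [d]$. The key observation is that, for any $B \in F_0$ and any type-$\sigma$ witness $s$, membership $s \in B$ reduces to the $t$ inequalities $\pi_k(s) \leq \max\pi_k(B)$ for $k \in [t]$: the remaining coordinates are automatic from $B \supseteq B_0$ and $\pi_k(s) \in \pi_k(B_0) \subseteq \pi_k(B)$ for $k > t$. Writing $p_B := (\max\pi_k(B))_{k \in [t]}$ and $Q$ for the projection of the type-$\sigma$ witness set to $\mathbb{R}^t$, the problem has been reduced to a $t$-dimensional fractional discrete Helly for ``lower quadrants'': we have many pairs of corner vectors $p_B$ that jointly dominate a common element of $Q$, and we wish to find a single element of $Q$ dominated by a linear number of the $p_B$'s.

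When $t=1$, this is the one-dimensional version of \Cref{thm:main}, which I would prove directly by combining one-dimensional Eckhoff with an iterated pigeonhole on whether witnesses lie to the left or to the right of the current common interval. For $t \geq 2$, applying this one-dimensional lemma in each coordinate yields a candidate dominated vector $(p^{(1)},\ldots,p^{(t)})$; \textbf{the main obstacle} is that this vector need not correspond to an actual element of $Q$, because $Q$ does not decompose as a product of its coordinate projections. My plan to overcome this is to iterate the reduction one coordinate at a time: select a value $p^{(1)}$ accounting for a large share of pairs via a pigeonhole on the first coordinate of witnesses, restrict $S$ to the fiber $\{\pi_1 = p^{(1)}\}$, and recurse in dimension $t-1$. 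Tracking the multiplicative density loss across these $t$ iterations is the main quantitative challenge, and is what forces the threshold $c_d$ to depend on $d$.
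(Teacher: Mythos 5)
The outer layers of your argument are fine: Eckhoff's theorem does produce a large subfamily $F_0$ with $B_0=\bigcap F_0\neq\emptyset$ for $\alpha$ close to $1$, the count of surviving $S$-intersecting pairs inside $F_0$ is correct, the pigeonhole over the $3^d-1$ signed types is legitimate, and the reduction of membership $s\in B$ to the $t$ inequalities $\pi_k(s)\le\max\pi_k(B)$ is sound. The $t=1$ case is also genuinely easy (take the box minimizing $\max\pi_1(B)$ among those in a surviving pair; its witness is dominated by every other relevant corner). The problem is the step you yourself flag for $t\ge 2$, and it is not merely a quantitative challenge but a missing idea. The ``pigeonhole on the first coordinate of witnesses'' is over an a priori unbounded set of real values, so no single value $p^{(1)}$ need account for a positive fraction of the pairs; and the natural discretization (a witness's first coordinate only matters through which of the $m$ thresholds $\max\pi_1(B)$, $B\in F_0$, it lies below) still leaves $m+1$ classes, so the pigeonhole loses a factor of order $1/m$ and the recursion does not preserve a positive density of pairs. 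Note also that the reduction to lower orthants does not bottom out: the family $\{x:x\le p_B\}$ in $\mathbb{R}^t$ has $S$-Helly number $t$ (the coordinatewise minimum of the corners is attained on at most $t$ of them), so for $t$ close to $d$ the fractional-for-pairs statement for orthants is essentially as hard as the theorem you are trying to prove, and your recursion is the only tool offered to close that loop.

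For comparison, the paper avoids this geometric recursion entirely. It first proves a finite statement (\Cref{thm:main2}): there is an $N(d)$ such that any $N(d)$ pairwise $S$-intersecting boxes contain an $S$-intersecting $(d+1)$-tuple. This is done combinatorially, by encoding the family through the $2d$ permutations of left/right endpoints in each coordinate and applying the multicolor Ramsey theorem to a ``pattern graph'' to find a pair $P$ whose $P$-dependent set is large. Graph supersaturation then converts a density of $S$-intersecting pairs into a density of $S$-intersecting $(d+1)$-tuples, and the Edwards--Sober\'on fractional theorem (\Cref{thm:d+1}) finishes. If you want to salvage your approach, the honest assessment is that you would need an independent proof of the fractional-for-pairs statement for lower orthants in $\mathbb{R}^t$ for all $t\le d$; a Ramsey/supersaturation argument in the spirit of the paper seems hard to avoid there.
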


Helly's theorem for axis-parallel boxes states that if every two boxes meet, then all boxes have a point in common.
A discrete version of Helly's theorem, or the {\em $S$-Helly theorem}, for axis-parallel boxes was introduced in 2008 by Halman~\cite{Hal08}. It asserts that for every finite family $F$ of axis-parallel boxes in $\mathbb{R}^d$ and $S \subset \mathbb{R}^d$, if every $2d$-tuple is $S$-intersecting, then $F$ is $S$-intersecting. See, for example, \cite[Section 2.1]{ALS17} for brief overview on $S$-Helly theorems for other set systems.

Recently, Edwards and Sober\'{o}n~\cite{ES24+} established colorful and fractional generalizations of Halman's theorem, by showing the $(2d-1)$-collapsibility of the simplicial complex on $F$ whose simplices are precisely the $S$-intersecting subfamilies. From the $(2d-1)$-collapsibility of such a complex, it immediately follows from a result by Kalai~\cite{Kal84} that given $S \subset \mathbb{R}^d$ and $\alpha\in(0,1]$, every finite family $F$ of axis-parallel boxes in $\mathbb{R}^d$ with at least $\alpha\binom{|F|}{2d}$ many $S$-intersecting $2d$-tuples contains an $S$-intersecting subfamily of size $(1-(1-\alpha)^{1/2d})|F|$.
In addition, Edwards and Sober\'{o}n proved that if the density of $S$-intersecting $(d+1)$-tuple is positive, then the density of $S$-intersecting $2d$-tuples is also positive. This implies that axis-parallel boxes have a fractional $S$-Helly property for $(d+1)$-tuples.
\begin{theorem}[\cite{ES24+}]\label{thm:d+1}
    For every positive integer $d$, there is a function $\gamma_d:(0,1]\to(0,1]$ such that the following holds:
    for every $\alpha\in(0,1]$ and finite family $F$ of axis-parallel boxes in $\mathbb{R}^d$, if there are at least $\alpha\binom{|F|}{d+1}$ many $S$-intersecting $(d+1)$-tuples of $F$, then $F$ contains an $S$-intersecting subfamily of size at least $\gamma_d(\alpha)|F|$. 
\end{theorem}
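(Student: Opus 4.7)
My proof would combine two ingredients. The first is the cited $(2d-1)$-collapsibility of the simplicial complex $\mathcal{N}(F,S)$ of $S$-intersecting subfamilies, established by Edwards and Sober\'on: via Kalai's theorem on $d$-collapsible complexes, this directly produces a linear-sized $S$-intersecting subfamily whenever $F$ has positive density of $S$-intersecting $2d$-tuples (exactly the statement quoted in the introduction). The second ingredient is a \emph{density-boost lemma}: a density $\alpha$ of $S$-intersecting $(d+1)$-tuples forces a density $\gamma'(d,\alpha) > 0$ of $S$-intersecting $2d$-tuples. Once the lemma is established, Theorem~\ref{thm:d+1} follows immediately by first boosting the $(d+1)$-tuple density to a $2d$-tuple density and then invoking Kalai's theorem through the $(2d-1)$-collapsibility.

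\textbf{Proving the density-boost.} I would work with the cell structure of the box arrangement. The $2d|F|$ coordinate-aligned hyperplanes bounding the members of $F$ partition $\mathbb{R}^d$ into $O(|F|^d)$ relatively open cells, and on each cell $C$ the set $F_C := \{B \in F : C \subseteq B\}$ is constant. A subfamily $G \subseteq F$ is $S$-intersecting precisely when $G \subseteq F_C$ for some cell $C$ meeting $S$; enumerate these as $F_1, \ldots, F_m$ with $m = O(|F|^d)$. The hypothesis reads $\big|\bigcup_j \binom{F_j}{d+1}\big| \geq \alpha \binom{|F|}{d+1}$ and the goal is $\big|\bigcup_j \binom{F_j}{2d}\big| \geq \gamma'(d,\alpha) \binom{|F|}{2d}$. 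I would argue by dichotomy: either some $|F_j| \geq \delta|F|$ for an appropriate $\delta = \delta(d,\alpha)>0$, in which case $F_j$ itself is already a linear-sized $S$-intersecting subfamily and sidesteps the need for the boost; or every $|F_j|$ is sublinear, in which case the polynomial bound on $m$ combined with the density hypothesis forces many cells with $|F_j| \geq 2d$, and for those the uniform inequality $\binom{|F_j|}{2d} \geq \binom{|F_j|}{d+1}/\binom{2d}{d+1}$ converts the $(d+1)$-mass into $2d$-mass.

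\textbf{Main obstacle.} The principal difficulty is the over-counting when bounding $\big|\bigcup_j \binom{F_j}{2d}\big|$ from below by the sum $\sum_j \binom{|F_j|}{2d}$: a single $2d$-tuple can belong to many $F_j$'s simultaneously, so a naive union bound is very loose. I would address this by assigning a \emph{canonical} witness cell to each $S$-intersecting tuple $T$ — for instance, the cell containing the lexicographically smallest point of $S \cap \bigcap T$ — so that the resulting counts are disjoint and each $T$ is charged to only one $F_j$. A secondary and deeper obstacle is that a purely power-mean estimate on the $|F_j|$'s is not tight when the $F_j$'s are spread thinly across many cells; closing this gap requires leveraging the special combinatorics of axis-parallel box arrangements (each cell is determined by at most $2d$ coordinate-extremal events, one per axis), and this structural input is ultimately what allows the fractional $S$-Helly number of boxes to be strictly smaller than the $S$-Helly number $2d$. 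After the boost, the application of Kalai's theorem to the $(2d-1)$-collapsible $\mathcal{N}(F,S)$ is mechanical, and the composition yields the function $\gamma_d$ with the stated properties.
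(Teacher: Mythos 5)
This theorem is quoted from Edwards and Sober\'on~\cite{ES24+}; the paper does not reprove it, but the strategy it attributes to them --- $(2d-1)$-collapsibility of the complex of $S$-intersecting subfamilies, Kalai's fractional bound for collapsible complexes to handle $2d$-tuples, and a density boost from $(d+1)$-tuples to $2d$-tuples --- is exactly your outer skeleton. At that level you have reconstructed the right architecture.

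The genuine gap is in your proof of the density-boost lemma, and it is quantitative, not just a matter of tightening the union bound. Your plan converts each of the at least $\alpha\binom{n}{d+1}$ $S$-intersecting $(d+1)$-tuples into $2d$-tuples at a rate bounded by a constant depending only on $d$ (via $\binom{|F_j|}{2d}\geq\binom{|F_j|}{d+1}/\binom{2d}{d+1}$ applied cell by cell). Even with perfectly disjoint charging this produces only $O_d(n^{d+1})$ many $S$-intersecting $2d$-tuples, whereas the target is $\gamma'\binom{n}{2d}=\Theta(n^{2d})$; for $d\geq 2$ the former is a vanishing fraction of the latter, so the second branch of your dichotomy cannot close. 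Worse, the failure is intrinsic to the abstract counting framework: with $m=\Theta(n^d)$ cells each of size $k\approx(\alpha n)^{1/(d+1)}$ one already has $\sum_j\binom{|F_j|}{d+1}\geq\alpha\binom{n}{d+1}$ while $\sum_j\binom{|F_j|}{2d}=o(\binom{n}{2d})$, and as an abstract union of cliques such a configuration really does have vanishing $2d$-tuple density. Hence the geometry of box arrangements must enter in an essential way, and the sentence where you defer to ``the special combinatorics of axis-parallel box arrangements'' is precisely where the entire content of the lemma lives; the canonical-witness device fixes overcounting but not the shortfall. (Your first branch is fine, and in fact a single cell with $|F_j|\geq\delta n$ already yields $\binom{\delta n}{2d}\geq\delta^{2d}(1-o(1))\binom{n}{2d}$ many $S$-intersecting $2d$-tuples, so it proves the boost in that case as well.) What is missing is a mechanism that generates $2d$-tuples multiplicatively rather than one per $(d+1)$-tuple --- for instance, a structural statement guaranteeing that many $S$-intersecting $(d+1)$-tuples share a bounded-size core whose witness point in $S$ survives in linearly many extensions. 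Without such an ingredient the proposal does not establish the theorem.
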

Theorem~\ref{thm:main} extends the above theorem to a fractional $S$-Helly property for pairs under certain conditions. This tells us that, even though the Helly number and the $S$-Helly number for axis-parallel boxes are different, we have qualitatively similar fractional properties.

In order to prove Theorem~\ref{thm:main}, it is sufficient to show that the following holds.
\begin{theorem}\label{thm:main2}
    There exists a function $N:\mathbb{N}\to\mathbb{N}$ such that for every $S\subset\mathbb{R}^d$ and family $F$ of axis-parallel boxes in $\mathbb{R}^d$ with $|F|=N(d)$, if every pair of $F$ is $S$-intersecting, then $F$ contains an $S$-intersecting subfamily of size $d+1$.
\end{theorem}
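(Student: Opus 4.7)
The plan is to use iterated Erd\H{o}s--Szekeres to extract a sub-family of $F$ in which the boxes are monotonely arranged in every coordinate, and then to observe that in such a sub-family every multiple intersection coincides with a pairwise intersection, so that the pairwise $S$-intersecting assumption upgrades directly to a $(d+1)$-wise $S$-intersecting conclusion. To carry out the first step, write $B_i = \prod_{k=1}^d [a_i^{(k)},b_i^{(k)}]$ for each $B_i\in F$, producing $2d$ real-valued sequences $(a_i^{(1)})_i,(b_i^{(1)})_i,\ldots,(a_i^{(d)})_i,(b_i^{(d)})_i$. I would apply Erd\H{o}s--Szekeres successively to these sequences, restricting at each step to a monotone sub-sequence and using that sub-sequences of monotone sequences remain monotone. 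After $2d$ applications this produces a sub-family $F'\subseteq F$ of size $m\ge |F|^{1/4^d}$, which we relabel as $B_1,\ldots,B_m$ in the induced order, such that for every coordinate $k$ each of $(a_i^{(k)})_{i=1}^m$ and $(b_i^{(k)})_{i=1}^m$ is monotone (independently weakly increasing or weakly decreasing). Setting, say, $N(d)=(d+1)^{4^d}$ then guarantees $m\ge d+1$.

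The key lemma is that in $F'$, for every $J\subseteq\{1,\ldots,m\}$ with $|J|\ge 2$,
\[
\bigcap_{i\in J} B_i \;=\; B_{\min J} \cap B_{\max J}.
\]
To see this, fix a coordinate $k$; the $k$-th projection of the left side is $[\max_{i\in J} a_i^{(k)},\,\min_{i\in J} b_i^{(k)}]$. Monotonicity of $(a_i^{(k)})$ in $i$ forces the maximum of $a_i^{(k)}$ over $J$ to be attained at $\min J$ or $\max J$; the same holds for the minimum of $b_i^{(k)}$. A short check through the four combinations of monotonicities shows that the resulting interval is exactly the $k$-th projection of $B_{\min J}\cap B_{\max J}$, and taking the product over $k$ gives the claimed equality of boxes.

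Granted the lemma, pick any $J\subseteq\{1,\ldots,m\}$ of size $d+1$. The $(d+1)$-fold intersection $\bigcap_{i\in J}B_i$ collapses to the pairwise intersection $B_{\min J}\cap B_{\max J}$, which contains a point of $S$ by the pairwise $S$-intersecting hypothesis; hence $\{B_i:i\in J\}$ is the required $S$-intersecting $(d+1)$-subfamily. I do not foresee a serious obstacle: the only point requiring care is the bookkeeping in the key lemma across the four monotonicity patterns per coordinate, and the bound $N(d)\le(d+1)^{4^d}$ that emerges is surely far from optimal, but the statement only asserts the existence of such an $N(d)$.
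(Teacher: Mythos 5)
Your argument is correct. It rests on the same underlying idea as the paper's proof --- pass to a subfamily on which all $2d$ endpoint orders are monotone, and observe that there the full intersection collapses to the intersection of the two extreme boxes --- but you reach that subfamily by a different combinatorial route. The paper encodes the $2d$ orders as permutations, colors the edges of the complete graph on $[n]$ by the ``pattern'' of each pair (at most $2^{2d-1}$ colors), and invokes the multicolor Ramsey theorem to get a monochromatic clique, which is exactly a set of indices on which every permutation is monotone; it then phrases the collapse of the intersection via an abstract notion of $P$-dependent sets. You instead iterate Erd\H{o}s--Szekeres $2d$ times on the real sequences $a_i^{(k)}, b_i^{(k)}$ and prove the collapse $\bigcap_{i\in J}B_i = B_{\min J}\cap B_{\max J}$ directly coordinate by coordinate. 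Your route is more elementary and yields the explicit bound $N(d)\le (d+1)^{4^d}$ rather than a multicolor Ramsey number (both are of the same doubly exponential order, with yours slightly cleaner). What the paper's heavier machinery buys is reusability: the permutation/dependency formalism is exactly what is iterated in Section 3 to handle the $d$-tuplewise hypothesis and obtain a linear bound there, which your concrete geometric lemma does not obviously generalize to. The only points needing care in your write-up are routine: the ceiling bookkeeping in the iterated Erd\H{o}s--Szekeres bound, and noting that a subsequence of a monotone sequence stays monotone so earlier coordinates are not disturbed --- both of which you already flag.
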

Once the above statement is confirmed, the supersaturation phenomenon of graphs allows us to apply Theorem~\ref{thm:d+1} under the conditions in Theorem~\ref{thm:main}.
Here we state a specific statement that we will use.
\begin{theorem}[supersaturation]\label{thm:turan}
    Given a positive integer $d\geq2$, there exist a constant $t_m\in(0,1)$ and a function $f_m:(t_m,1]\to(0,1]$ such that for every $\alpha \in (t_m,1]$, every simple undirected graph on $n$ vertices with at least $\alpha\binom{n}{2}$ many edges contains at least $f_{m}(\alpha)\binom{n}{m}$ many copies of $K_m$.
\end{theorem}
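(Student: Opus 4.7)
The plan is to reduce the statement to Turán's theorem via an averaging argument over random $s$-element vertex subsets, with $s$ a sufficiently large constant. Set $t_m := 1 - \frac{1}{m-1}$, the Turán density for $K_m$, and fix $\alpha \in (t_m, 1]$. Choose an intermediate value $\tau$ with $t_m < \tau < \alpha$ (say $\tau := (\alpha+t_m)/2$), and pick an integer $s = s(\alpha, m)$ large enough that the Turán bound $(1 - \frac{1}{m-1})\,s^2/2$ lies below $\tau\binom{s}{2}$; this is possible because their ratio equals $t_m \cdot s/((s-1)\tau)$, which tends to $t_m/\tau < 1$ as $s \to \infty$. With such a choice of $s$, every $s$-vertex graph with more than $\tau \binom{s}{2}$ edges contains at least one copy of $K_m$.

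The first step is to show that a positive constant fraction of random $s$-subsets exceed this edge-threshold. Let $G$ denote the given graph on $n$ vertices, sample $W \subseteq V(G)$ uniformly among all $s$-subsets, and set $X := e(G[W])$. By linearity of expectation $\mathbb{E}[X] = \alpha \binom{s}{2}$, and since $0 \leq X \leq \binom{s}{2}$ deterministically, writing $p := \Pr[X > \tau\binom{s}{2}]$ I obtain
\[
\alpha \binom{s}{2} \;=\; \mathbb{E}[X] \;\leq\; \tau \binom{s}{2}(1-p) + \binom{s}{2}\,p,
\]
which rearranges to $p \geq (\alpha - \tau)/(1 - \tau) > 0$. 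Notably, no concentration inequality is required; the bounded-range observation replaces it.

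A double count then completes the proof. Combining the first two steps, the number of pairs $(W, K)$ with $W$ an $s$-subset of $V(G)$ and $K$ a copy of $K_m$ inside $G[W]$ is at least $p \binom{n}{s}$, while each copy of $K_m$ in $G$ lies in exactly $\binom{n-m}{s-m}$ such subsets. Dividing yields at least $(p/\binom{s}{m})\binom{n}{m}$ copies of $K_m$ in $G$, so one may take $f_m(\alpha) := (\alpha - \tau)/((1-\tau)\binom{s}{m})$. The one delicate point is calibrating $\tau$ and $s$ jointly as $\alpha$ varies, but since both can be expressed explicitly in terms of $\alpha$ and $m$ (for instance $\tau = (\alpha+t_m)/2$ and any $s$ with $s - 1 > t_m/(\tau - t_m)$), the resulting $f_m$ is a genuine function of $\alpha$ alone. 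This is the classical Erd\H{o}s--Simonovits supersaturation phenomenon in its cleanest case.
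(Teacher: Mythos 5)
The paper does not actually prove this statement --- it is quoted as the classical Erd\H{o}s--Simonovits supersaturation theorem --- so your argument can only be judged on its own. Your overall route (Tur\'an's theorem on random $s$-subsets, a bounded-range averaging bound in place of a concentration inequality, then double counting) is the standard proof and is correct for all $n\geq s$: the expectation computation, the inequality $p\geq(\alpha-\tau)/(1-\tau)$, and the identity $\binom{n}{s}/\binom{n-m}{s-m}=\binom{n}{m}/\binom{s}{m}$ are all fine.

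There is, however, a genuine gap in the regime $m\leq n<s(\alpha,m)$, where the random-subset argument says nothing, and with your choice $t_m=1-\tfrac{1}{m-1}$ the statement is actually \emph{false} there. Concretely, for $m=3$ you set $t_3=\tfrac12$; take $\alpha=0.6$, $n=4$, and $G=K_{2,2}$. Then $e(G)=4\geq 0.6\binom{4}{2}$, yet $G$ contains no triangle, while the theorem demands at least $f_3(0.6)\binom{4}{3}>0$ of them. The point is that $\mathrm{ex}(n,K_m)/\binom{n}{2}$ only \emph{decreases} to the Tur\'an density $1-\tfrac{1}{m-1}$; its supremum over $n\geq m$ is attained at $n=m$ and equals $1-2/\bigl(m(m-1)\bigr)$. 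The fix is cheap: set $t_m:=1-2/\bigl(m(m-1)\bigr)$ instead. Then for $m\leq n<s$, any graph with at least $\alpha\binom{n}{2}>\mathrm{ex}(n,K_m)$ edges contains at least one copy of $K_m$, and one copy suffices because your $f_m(\alpha)\leq 1/\binom{s}{m}\leq 1/\binom{n}{m}$; for $n\geq s$ your argument goes through unchanged (the enlarged $t_m$ only makes $\tau$ larger, so the Tur\'an step is still valid). With that one adjustment, and replacing ``$\mathbb{E}[X]=\alpha\binom{s}{2}$'' by ``$\mathbb{E}[X]\geq\alpha\binom{s}{2}$'', the proof is complete.
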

\begin{proof}[Proof of Theorem~\ref{thm:main} from Theorem~\ref{thm:main2}]
We show that the statement is true if we set 
$$c_d = t_{N(d)}\text{ and }\beta_d(\alpha) = \gamma_d\left(\left(f_{N(d)}(\alpha)/\binom{N(d)}{d+1}\right)\right).$$
Let $F$ be a finite family of $n$ axis-parallel boxes in $\mathbb{R}^d$ with at least $\alpha\binom{n}{2}$ many $S$-intersecting pairs for some $\alpha \in (c_d,1]$.
Consider a graph on $F$ whose edges are the $S$-intersecting pairs in $F$. Then Theorem~\ref{thm:turan} implies that there are $f_{N(d)}(\alpha)\binom{n}{N(d)}$ many cliques of size $N(d)$.
By Theorem~\ref{thm:main2}, each such clique contains a subset of size $d+1$ that corresponds to an $S$-intersecting $(d+1)$-tuple of $F$.
Since each $(d+1)$-tuple can be contained in at most $\binom{n-d-1}{N(d)-d-1}$ many $N(d)$-tuples, we see that $F$ contains at least $\alpha'\binom{n}{d+1}$ many $S$-intersecting $(d+1)$-tuples, where $\alpha'=\left(f_{N(d)}(\alpha)/\binom{N(d)}{d+1}\right)$.
Finally, Theorem~\ref{thm:d+1} shows that $F$ contains an $S$-intersecting subfamily of size at least $\gamma_d(\alpha')n$.
\end{proof}

The existence of $N(d)$ in Theorem~\ref{thm:main2} will be shown in Section~\ref{section:main}. In Section~\ref{sec:further}, we discuss an analogue of Theorem~\ref{thm:main2} that replaces the pairwise intersection pattern with $d$-tuplewise intersection pattern.

\section{Intersection patterns of pairwise $S$-intersecting axis-parallel boxes}\label{section:main}
In this section, we present a proof of Theorem~\ref{thm:main2}.
Actually, we will prove a slightly stronger statement: for every finite family $F=\{B_1,\ldots,B_n\}$ of axis-parallel boxes in $\mathbb{R}^d$ with sufficiently large $n$, if every pair of $F$ intersects, there exists $i_1,\ldots,i_{d+1}\in[n]$ such that $B_{i_d} \cap B_{i_{d+1}} = B_{i_1}\cap B_{i_2}\cap\cdots\cap B_{i_{d+1}}$.
It immediately follows that any point that meets $B_{i_d}\cap B_{{i_{d+1}}}$ is contained in $B_{i_1}\cap B_{i_2}\cap\cdots\cap B_{i_{d+1}}$.

For a finite family $F$ of axis-parallel boxes in $\mathbb{R}^d$, let $$\pi_i(F) = \{\pi_i(A)\mid A\in F\} = \{I_j^{(i)}=[a_j^{(i)},b_j^{(i)}]\mid j\in[n]\},$$ where $\pi_i$ is the projection map onto the $i$-th coordinate.
Here, all boxes can be assumed to be bounded due to the finiteness of $F$.
If every $b$-tuple of $F$ is $S$-intersecting, then each $\pi_i(F)$ is an intersecting family of intervals by Helly's theorem for axis-parallel boxes. Consider two permutations on $[n]$ arising from the arrangement of endpoints of the intervals in $\pi_i(F)$. That is, we take permutations $\ell_i, r_i \in S_n$ such that 
$$\ell_i(j) < \ell_i(k)\text{ only if }a_j^{(i)} \leq a_k^{(i)}\text{, and }r_i(j) < r_i(k)\text{ only if }b_j^{(i)} \geq b_k^{(i)}.$$ 


Given a permutation $\sigma$, define a total order $\preceq_\sigma$ on \([n]\) by \(a\preceq_\sigma b\) if and only if \(\sigma(a)\leq\sigma(b)\), and let \(\max_\sigma\) be the maximum function with respect to \(\preceq_\sigma\).
We observe that for every $j,k,m\in[n]$,
$$m\in \{s\in[n]: s\leq \text{max}_{\ell_i}\{j, k\}\}\cap \{s\in[n]: s\leq \text{max}_{r_i}\{j, k\}\} \implies I_j^{(i)} \cap I_k^{(i)} \subset I_m^{(i)}.$$
This holds for all $i\in[d]$, so we immediately obtain the following lemma.
\begin{lemma}\label{lem:premain}
    For every \(P\subseteq [n]\) and \(P'=\bigcap_{i=1}^d\left(\{s\in[n]: s\preceq_{\ell_i}\max_{\ell_i}P\}\cap\{s\in[n]: s\preceq_{r_i}\max_{r_i}P\}\right)\),
    \[
        \bigcap_{j\in P}B_j=\bigcap_{j\in P'}B_j.
    \]
\end{lemma}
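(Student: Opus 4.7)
The plan is to prove the two inclusions of the claimed equality separately. First I will observe that $P\subseteq P'$: for every $j\in P$ and every $i\in[d]$, the very definition of $\max_{\ell_i}P$ and $\max_{r_i}P$ gives $j\preceq_{\ell_i}\max_{\ell_i}P$ and $j\preceq_{r_i}\max_{r_i}P$, so $j$ lies in each of the $2d$ sets whose intersection defines $P'$. The inclusion $\bigcap_{j\in P'}B_j\subseteq\bigcap_{j\in P}B_j$ then follows automatically.

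For the reverse inclusion, I would fix an arbitrary $m\in P'$ and show $\bigcap_{j\in P}B_j\subseteq B_m$ coordinate by coordinate. Fixing $i\in[d]$ and writing $j_i:=\max_{\ell_i}P$ and $k_i:=\max_{r_i}P$, the defining properties of $\ell_i$ and $r_i$ (small $\ell_i$-value corresponds to small left endpoint, and small $r_i$-value corresponds to large right endpoint) imply
\[
a_{j_i}^{(i)}=\max_{j\in P}a_j^{(i)}\quad\text{and}\quad b_{k_i}^{(i)}=\min_{j\in P}b_j^{(i)},
\]
so that $\bigcap_{j\in P}I_j^{(i)}=I_{j_i}^{(i)}\cap I_{k_i}^{(i)}$.

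Because $m\in P'$, we have $m\preceq_{\ell_i}j_i$ and $m\preceq_{r_i}k_i$, so the implication displayed immediately before the lemma statement, applied with $j=j_i$ and $k=k_i$, gives $I_{j_i}^{(i)}\cap I_{k_i}^{(i)}\subseteq I_m^{(i)}$. Intersecting over $i\in[d]$ yields $\bigcap_{j\in P}B_j\subseteq B_m$, and since $m\in P'$ was arbitrary, the reverse inclusion follows. In this situation there is essentially no obstacle: the lemma is a direct packaging of the preceding coordinatewise observation, and the only point requiring care is the translation between the orderings $\preceq_{\ell_i},\preceq_{r_i}$ on $[n]$ and the orderings of the actual endpoints $a_j^{(i)},b_j^{(i)}$ on the real line.
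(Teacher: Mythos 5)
Your proof is correct and follows essentially the same route as the paper: the inclusion $\bigcap_{j\in P'}B_j\subseteq\bigcap_{j\in P}B_j$ from $P\subseteq P'$, and the reverse inclusion from the coordinatewise observation stated just before the lemma. You merely spell out in more detail (via $\bigcap_{j\in P}I_j^{(i)}=I_{j_i}^{(i)}\cap I_{k_i}^{(i)}$) how that observation, stated for pairs, extends to arbitrary $P$, which the paper leaves implicit.
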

\begin{proof}
By the definition of $P'$, the observation in the above implies $\bigcap_{j\in P}B_j\subseteq\bigcap_{j\in P'}B_j$. The other direction $\bigcap_{j\in P}B_j\supseteq\bigcap_{j\in P'}B_j$ follows from an obvious observation that $P \subset P'$.
\end{proof}

We denote by $\Sigma_F$ the set of all such permutations, that is, \(\Sigma_F=\{\ell_1,r_1,\cdots,\ell_d,r_d \}\subseteq S_n\).

For every \(P\subseteq [n]\) and \(\sigma\in S_n\), let \(\sigma(\leq P)=\{s\in[n]: s\preceq_\sigma \max_\sigma P\}\). For each \(A\subseteq S_n\), we define 
\begin{itemize}
    \item the \emph{\(P\)-dependent set} of \(A\) as \(\displaystyle\langle A; P\rangle:=\bigcap_{\sigma\in A}\sigma(\leq P)\),
    \item the \emph{\(P\)-dependency} as \(\displaystyle d(A;P):=|\langle A;P\rangle|-|P|\), and
    \item  the \emph{$p$-tuple dependency} of \(A\) as \(\displaystyle d_p(A):=\max \{d(A;P): |P|=p\}\).
\end{itemize}
Now we reduce our problem into a purely combinatorial form with the following lemma.

\begin{lemma} \label{lem:main}
    Let \(S\) be a set in \(\mathbb{R}^d\), \(F=\{B_1,\cdots,B_n\}\) be a family of \(n\) axis-parallel boxes in \(\mathbb{R}^d\) and \(p\geq 2\) be a positive integer.
    If every \(p\)-tuple of \(F\) is \(S\)-intersecting, then \(F\) contains an \(S\)-intersecting (\(d_p(\Sigma_F)+p\))-tuple.
\end{lemma}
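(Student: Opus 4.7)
The plan is to apply Lemma~\ref{lem:premain} directly to a $p$-subset of $[n]$ that witnesses the combinatorial quantity $d_p(\Sigma_F)$.

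First I would choose $P \subseteq [n]$ with $|P| = p$ for which $d(\Sigma_F; P) = d_p(\Sigma_F)$, and set $P' := \langle \Sigma_F; P \rangle$, so that $|P'| = p + d_p(\Sigma_F)$. The inclusion $P \subseteq P'$ is immediate from the definitions: for every $\sigma \in \Sigma_F$ and every $s \in P$ we have $s \preceq_\sigma \max_\sigma P$, so $s \in \sigma(\leq P)$; intersecting over $\sigma \in \Sigma_F$ gives $P \subseteq P'$. Lemma~\ref{lem:premain}, applied to this $P$, then yields
\[
\bigcap_{j \in P} B_j \;=\; \bigcap_{j \in P'} B_j.
\]
By hypothesis every $p$-tuple of $F$ is $S$-intersecting, so the left-hand side meets $S$, and hence the right-hand side does too. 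Thus $\{B_j : j \in P'\}$ is an $S$-intersecting subfamily of $F$ of size $p + d_p(\Sigma_F)$, which is exactly the conclusion.

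Given Lemma~\ref{lem:premain}, there is essentially no obstacle to overcome here: the entire analytic content is packaged into the equality $\bigcap_{j \in P} B_j = \bigcap_{j \in P'} B_j$, and the present lemma only records the observation that a size-$p$ witness $P$ can be replaced by its closure $P'$ without changing the common intersection. The genuinely nontrivial task—bounding $d_p(\Sigma_F)$ from below in terms of $d$ once $n$ is sufficiently large, which is what will be needed to derive Theorem~\ref{thm:main2}—lies downstream of Lemma~\ref{lem:main} and is not required in its proof.
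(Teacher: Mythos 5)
Your proof is correct and is essentially identical to the paper's: both apply Lemma~\ref{lem:premain} to a size-$p$ set $P$ achieving the maximum in the definition of $d_p(\Sigma_F)$, use the hypothesis to conclude that $S\cap\bigcap_{j\in P}B_j\neq\varnothing$, and transfer this to the closure $P'=\langle\Sigma_F;P\rangle$ of size $p+d_p(\Sigma_F)$. No issues.
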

\begin{proof}
    If $S=\varnothing$, then the statement is obvious. So we may assume that $S$ is nonempty.
    By the Lemma~\ref{lem:premain}, for every \(P\in\binom{[n]}{p}\), it must be
    \[
        S\cap \bigcap_{j\in \langle \Sigma_F;P\rangle}B_j = S\cap \bigcap_{j\in P}B_j \not=\varnothing,
    \]
    and hence we can find an \(S\)-intersecting \(|\langle \Sigma_F;P\rangle|\)-tuple.
    By taking $P$ that gives maximum value of \(|\langle \Sigma_F;P\rangle|\), we ensure that $F$ contains an $S$-intersecting (\(d_p(\Sigma_F)+p\))-tuple.
\end{proof}

\subsection{Proof of Theorem~\ref{thm:main2}}\label{sec:mainpf}
By Lemma~\ref{lem:main}, it is sufficient to show that for every large enough $n$, we have $d_2(A) \geq d-1$ for all $A\subset S_n$ with $|A|\leq 2d$.
The existence of such $n$ can be obtained as a special case of the following.
\begin{theorem}\label{thm:f(n,a,2)}
    For every positive integers $a$ and $b$, there exists $n = n(a,b)$ such that for every \(A\subseteq S_{n}\) with \(|A|\leq a\), we have \(d_2(A)\geq b-1\).
\end{theorem}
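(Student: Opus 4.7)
The plan is to reduce the problem to finding a large subset on which every $\sigma\in A$ acts monotonically, and then to exhibit $P$ as the pair of natural-order extremes of this subset; the main tool is iterated Erd\H{o}s--Szekeres.

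The key observation is the following. Suppose $T=\{t_1<t_2<\cdots<t_k\}\subseteq[n]$, listed in natural order, is such that $(\sigma(t_1),\ldots,\sigma(t_k))$ is monotone (either increasing or decreasing) for every $\sigma\in A$. Setting $P=\{t_1,t_k\}$, the $\sigma$-maximum of $T$ is attained at $t_1$ or at $t_k$, and so equals $\max(\sigma(t_1),\sigma(t_k))=\max_\sigma P$. Hence every $t_j\in T$ lies in $\sigma(\leq P)$, and intersecting over $\sigma\in A$ yields $T\subseteq\langle A;P\rangle$. This gives $d(A;P)\geq k-2$, so producing such a $T$ of size $b+1$ forces $d_2(A)\geq b-1$.

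To build such a $T$, enumerate $A=\{\sigma_1,\ldots,\sigma_{a'}\}$ with $a'\leq a$ and iterate: starting from $T_0=[n]$, for each $k$ apply the Erd\H{o}s--Szekeres theorem to the sequence $(\sigma_k(t))_{t\in T_{k-1}}$ (in natural order) to extract $T_k\subseteq T_{k-1}$ on which $\sigma_k$ is monotone and $|T_k|\geq\lceil\sqrt{|T_{k-1}|}\,\rceil$. Monotonicity of $\sigma_j$ for $j<k$ is inherited when restricting to a subset (in natural order), so the final set $T_{a'}$ is monotone under every $\sigma\in A$ and has size at least $\lceil n^{1/2^{a}}\rceil$. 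Any integer $n(a,b)$ slightly larger than $(b+1)^{2^{a}}$ therefore suffices.

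The only conceptual subtlety I anticipate is the first step: recognizing that no common linear-order pattern across the permutations of $A$ is required---per-permutation monotonicity alone already forces the extremal pair $\{t_1,t_k\}$ to witness a large value of $|\langle A;P\rangle|$. Once this is observed, the remainder is routine Erd\H{o}s--Szekeres bookkeeping, and the crude bound $n(a,b)\leq(b+1)^{2^{a}}+O(1)$ should be easy to write down cleanly.
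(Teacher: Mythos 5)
Your proof is correct, and while it shares the paper's overall strategy---produce a set $T=\{t_1<\cdots<t_k\}$ on which every $\sigma\in A$ is monotone, then take $P$ to be the two natural-order extremes of $T$ and observe $T\subseteq\langle A;P\rangle$ (this second step is exactly the paper's Lemma~\ref{lem:chain})---the way you extract $T$ is genuinely different. The paper normalizes $A$ to contain the identity, builds an edge-coloring of $K_n$ by the ``$A$-pattern'' $[u,v]_A=\{\sigma: u\prec_\sigma v\}$ (at most $2^{a-1}$ colors), and invokes the multicolor Ramsey theorem to find a monochromatic clique of size $b+1$, which is precisely a set on which each $\sigma$ is monotone. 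You instead iterate Erd\H{o}s--Szekeres once per permutation, halving the exponent each time, which yields the same kind of set with the explicit and much better bound $n(a,b)\lesssim (b+1)^{2^a}$: polynomial in $b$ for fixed $a$, versus the Ramsey route's bound, which is exponential in $b$. The only blemish is notational: you write that $\max_\sigma P$ ``equals $\max(\sigma(t_1),\sigma(t_k))$,'' conflating the element of $[n]$ maximal under $\preceq_\sigma$ with its image under $\sigma$; the intended meaning (the $\sigma$-maximum of $T$ is attained at $t_1$ or $t_k$, hence every $t_j$ satisfies $t_j\preceq_\sigma\max_\sigma P$) is clearly correct. So your argument is a valid, more elementary, and quantitatively sharper alternative to the paper's Ramsey-based proof.
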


Let \(\varnothing\not=A\subseteq S_ n\).
We may assume that \(A\) contains the identity permutation.
To see this, observe that for every \(\sigma,\tau\in S_n\) and \(P\subseteq [n]\), we have \((\sigma\tau^{-1})(\leq \tau (P))=\tau(\sigma(\leq P))\). By the definition, $i \in (\sigma\tau^{-1})(\leq \tau (P))$ if and only if $i=\tau(\sigma^{-1}(j))$ for some $j \leq \max_\sigma P$. 
On the other hand, the latter is also equivalent to $i=\tau(k)$ for some $k$ where $\sigma(k) \leq \max_\sigma P$, that is, $i\in \tau(\sigma(\leq P))$.
From this observation, we have \(\tau\langle A;P\rangle=\langle A\tau^{-1};\tau P\rangle\) and \(d(A; P)=d(A\tau^{-1};\tau P)\).
Therefore, it follows \(d_p(A)=d_p(A\tau^{-1})\), and hence we may replace $A$ with $A\sigma^{-1}$ for randomly chosen \(\sigma\in A\) which contains the identity permutation.

For any \(u,v\in [n]\) with \(u< v\), define the \emph{\(A\)-pattern} \[[u,v]_A=\{\sigma\in A\setminus\{\textrm{id}\}: u\prec_{\sigma}v\}.\]
Then we define \emph{pattern graph} of \(A\) as an edge-colored complete graph over the vertex set \([n]\) where for each edge \(uv\), its color is the \(A\)-pattern \([u,v]_A\), assuming that $u< v$ without loss of generality. Therefore, the pattern graph of $A$ has at most $2^{a-1}$ colors.

\begin{lemma}\label{lem:chain}
    If the pattern graph of $A$ contains a monochromatic path \(P=u_0u_1\cdots u_b\) with $u_i < u_j$ whenever $i<j$, then $d_2(A) \geq b-1$.
\end{lemma}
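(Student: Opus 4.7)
The plan is to take $P = \{u_0, u_b\}$ and argue directly that the entire vertex set of the path, $\{u_0, u_1, \ldots, u_b\}$, lies inside $\langle A; P\rangle$. Once this is established we get $d(A;P) \geq (b+1)-2 = b-1$, and hence $d_2(A) \geq b-1$, which is the claim.

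First I would unpack what monochromaticity of the path buys us. Let $C \subseteq A \setminus \{\textrm{id}\}$ denote the common color $[u_i, u_{i+1}]_A$ of the path edges. By the definition of the $A$-pattern, a non-identity $\sigma \in A$ lies in $C$ if and only if $u_i \prec_\sigma u_{i+1}$ for \emph{every} $i \in \{0, 1, \ldots, b-1\}$; if $\sigma \notin C$ then necessarily $u_{i+1} \prec_\sigma u_i$ for every such $i$. Together with the identity (which orders them increasingly since $u_0 < u_1 < \cdots < u_b$), every $\sigma \in A$ arranges $u_0, \ldots, u_b$ either entirely in the forward order $u_0 \prec_\sigma u_1 \prec_\sigma \cdots \prec_\sigma u_b$ or entirely in the reverse order $u_b \prec_\sigma u_{b-1} \prec_\sigma \cdots \prec_\sigma u_0$.

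With $P = \{u_0, u_b\}$, I would then distinguish the two cases given by this dichotomy. When $\sigma \in C \cup \{\textrm{id}\}$, we have $\max_\sigma P = u_b$, and the chain $u_0 \prec_\sigma \cdots \prec_\sigma u_b$ places every $u_i$ in $\sigma(\leq P)$. When $\sigma \notin C \cup \{\textrm{id}\}$, we have $\max_\sigma P = u_0$, and the reverse chain still places every $u_i$ in $\sigma(\leq P)$. Intersecting over all $\sigma \in A$ therefore yields $\{u_0, u_1, \ldots, u_b\} \subseteq \langle A; P\rangle$, as required.

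The argument is essentially tautological once the definitions are unwound; the only real content is the choice of $P$ as the two endpoints of the monochromatic path, which forces every $\sigma \in A$ to either preserve or exactly reverse the path order so that every interior vertex automatically sits under $\max_\sigma P$. I do not anticipate any genuine obstacle beyond carefully handling the definitions of $[u,v]_A$ and $\sigma(\leq P)$, and the minor bookkeeping of treating the identity alongside the elements of $C$.
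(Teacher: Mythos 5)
Your proof is correct and follows exactly the paper's argument: the monochromaticity of the path forces every $\sigma\in A$ to order $u_0,\ldots,u_b$ either forward or in reverse, so with $P=\{u_0,u_b\}$ all of $u_0,\ldots,u_b$ lie in $\langle A;P\rangle$, giving $d_2(A)\geq b-1$. Your write-up just spells out the dichotomy and the handling of the identity in more detail than the paper does.
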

\begin{proof}
By the definition of the $A$-pattern, the assumption that the path $P$ is monochromatic implies that for each $\sigma\in A$, it must be either $u_0 \prec_\sigma u_1 \prec_\sigma \cdots \prec_\sigma u_b$ or $u_b \prec_\sigma \cdots \prec_\sigma u_1 \prec_\sigma u_0$.
In particular, this shows that $\langle A;\{u_0, u_b\}\rangle\supseteq\{u_0,\cdots,u_b\}$.
Thus we have \(d_2(A)\geq d(A;\{u_0, u_b\})\geq (b+1) - 2 = b-1\).
\end{proof}
\begin{proof}[Proof of Theorem~\ref{thm:f(n,a,2)}]
By the multicolor Ramsey theorem with $2^{a-1}$ colors, if $n$ is sufficiently large, then the pattern graph of \(A\) has a monochromatic clique of size $b+1$.
Since such a clique
It follows from Lemma~\ref{lem:chain} that \(d_2(A)\geq b-1\).
\end{proof}

This completes the proof of Theorem~\ref{thm:main2}.

\begin{remark}
    We also show that $N(d) = \Omega(d^{3/2})$ by constructing an example. Consider the standard basis $E=\{e_1,\ldots,e_d\}$ of $\mathbb{R}^d$ and take $d'=\lfloor\sqrt{d}\rfloor$ many subsets $E_1,\ldots,E_{d'}$ of $E$ such that every pair meets at exactly one element and no three have a common intersection.
    This can be found, for example, by taking $\lfloor\sqrt{d}\rfloor$ lines in general position in $\mathbb{R}^d$ and label the points where two lines meet. Then the set of points on a line corresponds to $E_i$.
    Now, let $B_i = \text{span}(E_i) \cap [-1,1]^d$, and take a family $F$ that consists of $\left\lfloor\frac{d}{2}\right\rfloor$ copies of each $B_i$. Finally, take $S = E$. Clearly, $F$ is a family of $\sqrt{d}\times\left\lfloor\frac{d}{2}\right\rfloor$ many axis-parallel boxes where every pair is $S$-intersecting but the largest $S$-intersecting subfamily has size $2\times\left\lfloor\frac{d}{2}\right\rfloor<d+1$.
\end{remark}

\subsection{Finding the optimal value of $N(2)$}\label{sec:N(2)}
In the previous subsection, we have shown an upper and a lower bounds on $N(d)$. However, there is a huge gap between them.
Here, we present the first step in finding the optimal value of $N(d)$.

Given a family $F$ of sets $B_1,B_2,\ldots,B_n$, the \emph{code} of $F$, denoted by $\textrm{code}(F)$, is the set of index subsets $J$ such that $\bigcap_{i\in J}B_i \setminus \bigcup_{i\notin J}B_i \neq \varnothing$. Each element of $\text{code}(F)$ is called a \emph{codeword}.
When each $B_i$ is an axis-parallel box, then it is known, for example, in \cite{BCH+24} that each codeword of $\text{code}(F)$ can be written as the intersection of two codewords where one is from $\text{code}(\pi_1(F))$ and the other is from $Y\in\text{code}(\pi_2(F))$.

In general, \(N(d)\geq n + 1\) is equivalent to the statement that there exists a family \(F=\{B_1, \cdots, B_n\}\) of pairwise intersecting axis-parallel boxes in \(\mathbb{R}^d\) such that for each \(i\not =k\), there exists a codeword $I_{j,k}$ of $\text{code}(F)$ of size at most $d$ that contains both $j$ and $k$. Note that if such a family exists, then by taking a set $S$ consists of one point from each nonempty region that corresponds to the codeword $I_{j,k}$, every pair of $F$ is $S$-intersecting but every maximal $S$-intersecting subfamily of $F$ has size at most $d$. In the plane, such a family with size $5$ cannot exist.
\begin{theorem}
    $N(2)=5$.
\end{theorem}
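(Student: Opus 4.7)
The plan is to establish $N(2)=5$ by proving both $N(2)\geq 5$ and $N(2)\leq 5$, using the combinatorial equivalence stated just before the theorem: $N(d)\geq n+1$ is equivalent to the existence of a family of $n$ pairwise intersecting axis-parallel boxes in $\mathbb{R}^d$ in which every pair $\{j,k\}$ is covered by a codeword of $\code(F)$ of size at most $d$. For $d=2$ this forces the pair itself to be a codeword.

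For the lower bound $N(2)\geq 5$, I would exhibit an explicit four-family in which each of the $\binom{4}{2}=6$ pairs is a size-$2$ codeword. A convenient choice, obtained by designing the $x$- and $y$-projections so that the chains of 1D codewords interact to produce all six $2$-element codewords, is
\[
B_1=[0,5]\times[3,6],\quad B_2=[3,7]\times[0,5],\quad B_3=[2,4]\times[1,7],\quad B_4=[1,6]\times[2,4],
\]
with witness points $(4.5,4.5),(2.5,4.5),(1.5,3.5),(3.5,1.5),(5.5,3.5),(2.5,2.5)$ realizing the pairs $\{1,2\},\{1,3\},\{1,4\},\{2,3\},\{2,4\},\{3,4\}$ respectively. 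Taking $S$ to consist of these six witnesses then gives a pairwise $S$-intersecting four-family with no $S$-intersecting triple, establishing $N(2)\geq 5$.

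For the upper bound $N(2)\leq 5$, I argue by contradiction: suppose $F=\{B_1,\ldots,B_5\}$ is a family of five pairwise intersecting axis-parallel rectangles in $\mathbb{R}^2$ in which all $\binom{5}{2}=10$ pairs are codewords. The analysis rests on two structural facts. First, as noted just above, each codeword factorizes as $S_x\cap S_y$ with $S_x\in\code(\pi_1(F))$ and $S_y\in\code(\pi_2(F))$. Second, five pairwise intersecting intervals on a line admit at most $2\cdot 5-1=9$ distinct codewords, forming a chain obtained by appending intervals in the $a$-order $\sigma$ and then dropping them in the $b$-order $\tau$; in particular each axis contributes at most two codewords of each size $1,2,3,4$, with the size-$2$ codewords being $\{\sigma_1,\sigma_2\}$ and $\{\tau_4,\tau_5\}$ and the size-$3$ codewords being $[5]\setminus\{\sigma_4,\sigma_5\}$ and $[5]\setminus\{\tau_1,\tau_2\}$.

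I would then enumerate the possible ways a pair $\{a,b\}\subseteq[5]$ can appear as $S_x\cap S_y$: directly when $\{a,b\}$ is itself a 1D codeword on one of the axes; as the intersection of two size-$3$ codewords whose two-element missing sets share exactly one element; or as the intersection of a size-$3$ codeword on one axis with a size-$4$ codeword on the other axis, where the unique element missing from the size-$4$ codeword lies in the size-$3$ codeword. A careful count of the resulting pair-candidates, exploiting that the direct size-$2$ codewords are $2$-subsets of the size-$3$ codewords on the same axis (so that the candidates produced by the third route overlap with the direct ones), shows that at most nine distinct pairs are realized in any configuration; since $\binom{5}{2}=10$, some pair necessarily fails to be a codeword and we obtain a contradiction. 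The hardest step is precisely this combinatorial bound of $9$: the three routes furnish up to $4+4+8$ pair-candidates in the abstract, and one must extract from the chain structure of 1D codewords enough structural overlaps to force the distinct count below~$10$.
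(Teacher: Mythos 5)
Your framework for the upper bound is the same as the paper's (factor each codeword of $\code(F)$ as an intersection of one codeword from each axis, and use that the 1D codewords of five pairwise intersecting intervals lie on two chains, so that every size-$2$ codeword of $F$ is a $2$-subset of one of the four size-$3$ chain elements), and your lower-bound example is correct and checkable: each of the six listed witness points lies in exactly the two claimed rectangles, so the four-family is pairwise $S$-intersecting with no $S$-intersecting triple. That construction is a genuine addition, since the paper's general $\Omega(d^{3/2})$ construction gives nothing for $d=2$ and the paper does not otherwise exhibit the lower bound.

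However, the upper bound --- the entire content of the theorem --- is not actually proved. You assert that ``a careful count \dots shows that at most nine distinct pairs are realized in any configuration,'' and you yourself flag this as the hardest step; but the one structural mechanism you offer for collapsing the $4+4+8$ candidates, namely that the route-3 pairs (a size-$3$ codeword met with a size-$4$ codeword from the other axis) ``overlap with the direct ones,'' is not true in general and does not suffice. A route-3 pair is $T\setminus\{c\}$ where $T$ is a size-$3$ codeword and $c$ is the unique element missing from the size-$4$ codeword on the other axis; this is a $2$-subset of $T$ but need not equal the size-$2$ codeword $T'\subsetneq T$ on the same chain. Indeed a single size-$3$ codeword can contribute all three of its $2$-subsets (its own size-$2$ sub-codeword plus two distinct route-3 pairs), and the pure covering condition $\binom{[5]}{2}=\bigcup_\omega\binom{\omega_3}{2}$ \emph{is} satisfiable, e.g.\ by $\{1,2,4\},\{2,4,5\},\{1,3,5\},\{2,3,4\}$. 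So no soft counting closes the argument: one must pin down the possible quadruples of size-$3$ codewords (the paper shows two of them must meet in exactly two elements, normalizes them to $\{1,2,4\}$ and $\{2,4,5\}$, forces the other two to be $\{1,3,5\}$ and $\{2,3,4\}$) and then verify, using the chain constraints $\omega_2\subset\omega_3\subset\omega_4$ exactly as in my computation above, that some pair such as $\{2,3\}$ or $\{3,4\}$ cannot be realized. Until that case analysis is carried out, the bound of nine is an unproven claim and the proof of $N(2)\le 5$ is incomplete.
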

\begin{proof}
We want to show that for every family \(F=\{B_1, B_2, B_3, B_4, B_5\}\) of pairwise intersecting axis-parallel boxes in \(\mathbb{R}^2\), it must be \(\textrm{code}(F)\not\supseteq\binom{[5]}{2}\).
As observed at the beginning, for each $i\in[2]$, the arrangement of the intervals in $\pi_i(F)$ is determined by two permutations, $\ell_i$ and $r_i$.
This also implies that $\text{code}(\pi_i(F))$ consists of two chains one subchain from each chain $\omega^{i}$, $i\in[2]$ defined as
\begin{align*}
    &\omega^{i,1}: \varnothing\subseteq\{\ell_i^{-1}(1)\}\subseteq \{\ell_i^{-1}(1),\ell_i^{-1}(2)\}\subseteq\cdots\subseteq\{\ell_i^{-1}(1), \cdots,\ell_i^{-1}(5)\}=[5],\text{ and }\\
    &\omega^{i,2}: [5]=\{r_i^{-1}(1), \cdots,r_i^{-1}(5)\}\supseteq\cdots\supseteq\{r_i^{-1}(1)\} \supseteq\varnothing.
\end{align*}
For each chain \(\omega \in \{\omega^{i.j}:i,j\in[2]\} =: \Omega\), let \(\varnothing=\omega_0\subsetneq \omega_1\subsetneq\cdots\subsetneq \omega_5 = [5]\), so \(|\omega_i|=i\). 
Then we have
$\textrm{code}(F)\subseteq\{\varnothing\}\cup\{\omega_i^{1,p}\cap \omega_j^{2,q}\mid i, j\in[5],p,q\in [2]\}$.
Hence, for each \(C\in\mathrm{code}(F)\) with \(|C|=2\), it must be of one the following types: either \(C = \varphi_2\) or \(C = \varphi_3\cap \psi_3\) or \(C = \varphi_3\cap \psi_4\) for some \(\varphi,\psi\in\Omega\).
In any case, there exists a chain \(\varphi\in \Omega\) with \(C\subseteq\varphi_3\).

Now, assume \(\binom{[5]}{2}\subseteq\textrm{code}(F)\). Then
\[
    \binom{[5]}{2} = \bigcup_{\omega\in\Omega}\binom{\omega_3}{2}.
\]
We claim that there are two distinct $\varphi, \psi \in\Omega$ with $|\varphi_3\cap\psi_3|=2$.
First, if there exist two chains with \(\varphi_3=\psi_3\), then we have \[\left|\bigcup_{\omega\in\Omega}\binom{\omega_3}{2}\right|\leq\binom{3}{2}\times3=9<10=\binom{5}{2},\] which is a contradiction.
Therefore, we have $|\varphi_3\cap\psi_3|\leq 2$ for any choice.
We also note that \(|\varphi_3\cap \psi_3|\geq 1\). This is obvious because \(|\varphi_3\cup\psi_3| \leq |[5]|=5\).
Now, suppose \(|\varphi_3\cap \psi_3|=1\).
Then it must be \(\varphi_3\cup \psi_3=[5]\). Thus, by the pigeonhole principle, for every $\omega \in \Omega\setminus\{\varphi,\psi\}$, \(|\varphi_3\cap \omega_3|=2\) or \(|\psi_3\cap \omega_3|=2\).
In any case we can find two chains $\varphi, \psi \in\Omega$ with $|\varphi_3\cap\psi_3|=2$.

Without loss of generality, we may assume \(\varphi_3 = \{1,2,4\}\) and \(\psi_3 =\{2,4,5\}\).
Then, the other two chains, say $\omega, \tau \in \Omega\setminus\{\varphi, \psi\}$, should satisfy that \(\binom{\omega_3}{2}\cup\binom{\tau_3}{2}\supseteq\{\{3,1\},\{3,2\},\{3,4\},\{3,5\}\}\).
Then $\{\psi_3, \tau_3\}$ is one of the following:
\begin{center}
\begin{itemize*}
  \item $\{\{1,2,3\},\{3,4,5\}\}\;\;\;\;\;$
  \item $\{\{1,3,4\},\{2,3,5\}\}\;\;\;\;\;$
  \item $\{\{1,3,5\},\{2,3,4\}\}$.
\end{itemize*}
\end{center}
We observe that $\{\omega_3,\tau_3\}=\{\{1,3,5\},\{2,3,4\}\}$ since the other cases does not have \(\{1,5\}\) in $\text{code}(F)$.

So far, we have shown that $\{\omega^{i,j}:i,j\in[2]\} = \{\varphi,\psi,\omega,\tau\}$ with $$\{\varphi_3,\psi_3,\omega_3,\tau_3\}=\{\{1,2,4\},\{2,4,5\},\{1,3,5\},\{2,3,4\}\}.$$
However, for any choice, we have that $\binom{[5]}{2} \nsubseteq\text{code}(F)$.
Therefore, it must be \(N(2)\leq 5\).
\end{proof}

\section{Intersection patterns of $d$-tuplewise $S$-intersecting axis-parallel boxes}\label{sec:further}
As a remark, we prove a linear bound for an analogue of Theorem~\ref{thm:main2}, replacing the pairwise intersection pattern with a $d$-tuplewise condition.
\begin{theorem}\label{thm:main3}
    There exists a constant $C>0$ such that for every $S\subset\mathbb{R}^d$ and family $F$ of axis-parallel boxes in $\mathbb{R}^d$ with $|F|=Cd$, if every $d$-tuple of $F$ is $S$-intersecting, then $F$ contains an $S$-intersecting subfamily of size $d+1$.
\end{theorem}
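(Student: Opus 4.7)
The plan is to invoke Lemma~\ref{lem:main} with $p = d$, reducing Theorem~\ref{thm:main3} to the combinatorial claim that there exists an absolute constant $C$ such that whenever $A \subseteq S_n$ with $|A| \leq 2d$ and $n \geq Cd$, we have $d_d(A) \geq 1$. Concretely, one seeks a $d$-subset $P \subseteq [n]$ and an element $m \in [n] \setminus P$ with $m \preceq_\sigma \max_\sigma P$ for every $\sigma \in A$; translating back to boxes via Lemma~\ref{lem:premain} gives $\bigcap_{j \in P \cup \{m\}} B_j = \bigcap_{j \in P} B_j$, and the latter is $S$-intersecting by hypothesis, so $P \cup \{m\}$ furnishes the desired $S$-intersecting $(d+1)$-tuple.

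The key observation that would drive the argument is that a permutation $\sigma \in A$ with $\sigma(m) \leq d$ is automatically handled: the set $\{x : x \prec_\sigma m\}$ has fewer than $d$ elements, so any size-$d$ subset $P$ disjoint from $\{m\}$ necessarily contains some $p$ with $p \succ_\sigma m$. The remaining ``live'' permutations are those with $\sigma(m) > d$, for which $P$ must intersect the dominator set $U_\sigma(m) = \{x : \sigma(x) > \sigma(m)\}$. The plan is therefore to choose $m$ satisfying two properties: (a) $m$ is not the top element $\sigma^{-1}(n)$ for any $\sigma \in A$, so that every $U_\sigma(m)$ is nonempty (this excludes at most $2d$ candidates), and (b) the number of live permutations is at most $d$. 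Given such an $m$, one builds $P$ greedily by picking one element from each live $U_\sigma(m)$ and padding arbitrarily to size exactly $d$.

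The main obstacle is establishing condition (b) for $n$ only linear in $d$. A naive double count $\sum_m |\{\sigma \in A : \sigma(m) \leq d\}| = |A|\cdot d \leq 2d^2$ only guarantees some $m$ with bot-$d$ coverage at least $d$ when $n \leq 2d$, which is too tight once $|A|$ is close to $2d$. To push the argument to $n = Cd$ for a fixed $C$, I would exploit the paired structure $\Sigma_F = \{\ell_1, r_1, \ldots, \ell_d, r_d\}$ arising from axis-parallel boxes: whenever $\pi_i(B_p) \subseteq \pi_i(B_m)$, the geometric constraint $a_j^{(i)} \leq b_j^{(i)}$ gives $p \succ_{\ell_i} m$ and $p \succ_{r_i} m$ simultaneously, so a single element of $P$ covers two permutations in $A$ at once, effectively halving the covering demand. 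Handling the residual case in which some coordinate $i$ has all projection intervals pairwise inclusion-incomparable (so no element can double-cover the pair $(\ell_i, r_i)$) would require a supplementary pigeonhole, assigning different members of $P$ to cover $\ell_i$ and $r_i$ separately while keeping the total size at $d$; I expect this balancing to be the technical heart of the proof.
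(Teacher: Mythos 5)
Your reduction is the right one and matches the paper's: apply Lemma~\ref{lem:main} with $p=d$, so that it suffices to find, for $n$ linear in $d$ and any $A\subseteq S_n$ with $|A|\le 2d$, a $d$-set $P$ and an $m\notin P$ with $m\in\langle A;P\rangle$ (i.e.\ $d_d(A)\ge 1$). Your ``key observation'' that permutations with $\sigma(m)\le d$ are automatically satisfied by any $d$-set avoiding $m$ is also correct. But the quantitative core is missing, and you say so yourself: your double count $\sum_m|\{\sigma:\sigma(m)\le d\}|=2d^2$ only produces an $m$ with at least $d$ ``dead'' permutations when $n\le 2d$, which does not cover $n=Cd$ for any $C>2$. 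The proposed rescue via interval inclusions $\pi_i(B_p)\subseteq\pi_i(B_m)$ is not developed and fails in the worst case: if in some coordinate the projection intervals pairwise overlap without any containment, no single element of $P$ can simultaneously cover $\ell_i$ and $r_i$, and you would need two elements of $P$ per such coordinate while only having $d$ elements for up to $2d$ live permutations. The ``supplementary pigeonhole'' you defer to is precisely the step that needs a new idea, so the proof does not close.

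The paper takes a genuinely different route to the combinatorial statement: it proves the recurrence $n(a+2,b;p+1)\le n(a,b;p)+2\lceil n(a,b;p)/a\rceil-1$ (Theorem~\ref{thm:p-dep}). Writing $\Delta=\lceil n/a\rceil$, a pigeonhole over the top $\Delta$ positions of the $a+2$ permutations finds two permutations $\sigma_a,\sigma_{a+1}$ and a common element $k$ lying in the top $\Delta$ of both. One deletes a set $V$ of at most $2\Delta-1$ elements (the preimages of those top positions under $\sigma_a$ and $\sigma_{a+1}$), applies the definition of $n(a,b;p)$ to the remaining $a$ permutations restricted to $[n]$ via the relabeling operation $\sigma/v$ and Lemma~\ref{lem:relabel}, and then adjoins $k$ to the resulting $P$: since $k$ dominates all surviving elements under both $\sigma_a$ and $\sigma_{a+1}$, the augmented $(p+1)$-set has the same dependency. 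Iterating $p-2$ times from the Ramsey-based base case $n(4,2;2)$ yields $n(2d,2;d)\in O(d)$. In other words, the paper handles the $2d$ permutations two at a time by induction on $p$, rather than attempting a single global covering argument; if you want to salvage your approach, you would need to replace your condition (b) by such an inductive peeling, since no single choice of $m$ can be expected to deaden $d$ of the $2d$ permutations at once when $n\gg 2d$.
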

The above statement can be achieved by proving an analogue of Theorem~\ref{thm:f(n,a,2)}: for every positive integers $a,b$ and $p$, there exists $n = n(a,b;p)$ such that \(d_p(A)\geq b-1\) for all \(A\subseteq S_{n}\) with \(|A|\leq a\).
Note that the function $n(a,b)$ in Theorem~\ref{thm:f(n,a,2)} is a special case that can be obtained as $n(a,b;2)$.
By Lemma~\ref{lem:main}, Theorem~\ref{thm:main3} can be obtained by showing that $n(2d,2;d)$ is linear in $d$.

For each permutation \(\sigma\in S_n\) and $v\in[n]$, we define \(\sigma\setminus v\in S_{n-1}\) as deleting and relabeling.
This is, $\sigma\setminus v$ is obtained by the following rules.
\begin{itemize}
    \item If \(i<\sigma^{-1}(v)\) and \(\sigma(i)<v\), then \((\sigma\setminus v)(i) = \sigma(i)\)
    \item If \(i<\sigma^{-1}(v)\) and \(\sigma(i)>v\), then \((\sigma\setminus v)(i) = \sigma(i)-1\).
    \item If \(i\geq\sigma^{-1}(v)\) and \(\sigma(i+1)<v\), then \((\sigma\setminus v)(i) = \sigma(i+1)\).
    \item If \(i\geq \sigma^{-1}(v)\) and \(\sigma(i+1)>v\), then \((\sigma\setminus v)(i) = \sigma(i+1)-1\).
\end{itemize}
For example, \(15672834\setminus 6 = 1562734\) and \(15672834\setminus 3 = 1456273\).

    Let $\sigma/v := (\sigma^{-1}\setminus v)^{-1}$, and for $V=\{v_1,\ldots,v_m\}\subset[n]$, let \[\sigma\setminus V = \{(((\sigma\setminus v_1)\setminus v_2)\cdots)\setminus v_m\}\text{ and }\sigma/V = (((\sigma/v_1)/v_2)\cdots)/v_m.\]
    For $A\subset S_n$, let $A\setminus V = \{\sigma\setminus V:\sigma\in A\}$ and $A/V = \{\sigma/V:\sigma\in A\}$.

    Define
    $$i^{v-}:=\begin{cases}i & \text{if }i<v\\i - 1 & \text{if }i >v\end{cases},\;\;\;i^{v+}:=\begin{cases}i & \text{if }i<v\\i + 1 & \text{if }i \geq v\end{cases},$$
    and for $P\subset[n-1]$ and $v\in[n-1]$, let $P^{v+} = \{i^{v+}: i\in P\}$. For $P\subset[n-m]$ and $V=\{v_1,\ldots,v_m\}\subset[n]$, let $P^{V+} = ((P^{v_1+})^{v_2+})^{\cdots})^{v_m+}$.
    This operation is indeed the order preserving bijection between \([n-m]\) and \([n]\setminus V\).

\begin{lemma}\label{lem:relabel}
    For every \(A\subseteq S_n\), $\sigma\in A$, \(P\subseteq[n-1]\), and $v\in[n]$, we have \(\langle A/v;P\rangle^{v+}\subseteq\langle A;P^{v+}\rangle\).
\end{lemma}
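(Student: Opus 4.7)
The plan is to show that $\sigma/v$ encodes exactly the restriction of the total order $\preceq_\sigma$ to $[n]\setminus\{v\}$, transported back to $[n-1]$ via the order-preserving bijection $(\cdot)^{v+}\colon [n-1]\to[n]\setminus\{v\}$. Concretely, the first step is to establish the equivalence
\[
  a\preceq_{\sigma/v} b \iff a^{v+}\preceq_{\sigma}b^{v+} \qquad\text{for all } a,b\in[n-1].
\]
To unpack this from the indirect definition $\sigma/v = (\sigma^{-1}\setminus v)^{-1}$, I would observe that the word $\sigma^{-1}(1)\sigma^{-1}(2)\cdots\sigma^{-1}(n)$ lists $[n]$ in $\preceq_\sigma$-order; deleting the entry at position $\sigma(v)$ (where $v$ sits) and decrementing the surviving entries that exceed $v$ produces a word that lists $[n-1]$ in $\preceq_{\sigma/v}$-order. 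In particular the $\preceq_{\sigma/v}$-rank of $a\in[n-1]$ coincides with the $\preceq_\sigma$-rank of $a^{v+}$ restricted to $[n]\setminus\{v\}$, and the equivalence follows.

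With this equivalence in hand, the bijection $(\cdot)^{v+}$ preserves maxima in the sense that $(\max_{\sigma/v}P)^{v+}=\max_{\sigma}P^{v+}$ for every $P\subseteq[n-1]$, which immediately yields
\[
  (\sigma/v)(\leq P)^{v+} \;=\; \sigma(\leq P^{v+})\setminus\{v\} \;\subseteq\; \sigma(\leq P^{v+}).
\]
Since $(\cdot)^{v+}$ is injective it commutes with intersection, so intersecting the above over $\sigma\in A$ gives
\[
  \langle A/v;P\rangle^{v+} \;=\; \bigcap_{\sigma\in A}(\sigma/v)(\leq P)^{v+} \;\subseteq\; \bigcap_{\sigma\in A}\sigma(\leq P^{v+}) \;=\; \langle A;P^{v+}\rangle,
\]
which is the claim.

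The only genuine obstacle will be verifying the equivalence cleanly, since the definition of $\sigma/v$ passes through two inversions and a word-deletion operation; once the word-picture is set up correctly, everything afterwards is routine bookkeeping with the bijection $(\cdot)^{v+}$. I also note that the displayed inclusion becomes an equality precisely when $v\notin\langle A;P^{v+}\rangle$, which is why the lemma is stated only as a containment rather than an equality.
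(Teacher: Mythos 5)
Your proposal is correct and takes essentially the same route as the paper: your key equivalence $a\preceq_{\sigma/v}b\iff a^{v+}\preceq_\sigma b^{v+}$ is exactly what the paper establishes via the explicit identity $(\sigma/v)(s^{v-})=\sigma(s)^{\sigma(v)-}$ (your word-deletion picture is just a more verbal rendering of the same computation), and the remaining bookkeeping with $\max$ and the intersection over $A$ matches the paper's. Your closing observation that the inclusion is an equality off of $v$ is correct but not needed for the lemma.
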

\begin{proof}
Suppose \(s\not=v\). Since $\sigma^{-1}(\sigma(s)) = s$ and $(\sigma^{-1}\setminus v)(\sigma(s)^{\sigma(v)-}) = s^{v-}$, we have $$(\sigma/v)(s^{v-})=\sigma(s)^{\sigma(v)-}.$$
Thus we observe that for every $s,t\in[n-1]$ with $s,t\neq v$, \(s^{v-}\prec_{\sigma/v}t^{v-}\) if and only if \(s\prec_\sigma t\). 
In particular, this implies \(\{s^{v+}: s\in (\sigma/v)(\leq P)\}\subseteq \sigma(\leq P^{v+})\), so we obtain \[\langle A/v;P\rangle^{v+}=\{s^{v+}\mid s\in \bigcap_{\sigma\in A}(\sigma/v)(\leq P)\}\subseteq\bigcap_{\sigma\in A}\sigma(\leq P^{v+})=\langle A;P^{v+}\rangle,\]
as desired.
\end{proof}
Now we prove a recurrence relation of $n(a,b;p)$ that implies that $n(2d,2;d)$ is linear in $d$.
\begin{theorem}\label{thm:p-dep}
    Given integers $a,b,p\geq1$, we have
    \[
        n(a+2,b;p+1)\leq n(a,b;p) + 2  \left\lceil\frac{n(a,b;p)}{a}\right\rceil - 1
    \]
\end{theorem}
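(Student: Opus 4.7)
The plan is to combine a pigeonhole argument on ``top'' elements of permutations in $A$ with a restriction-to-subset trick, so that the inductive hypothesis $n(a,b;p)=M$ applies to a carefully chosen subfamily. We set $k=\lceil M/a\rceil$ and assume $n\geq M+2k-1$. For each $\sigma\in A$, let $T_\sigma:=\{v\in[n]:\sigma(v)\geq n-k+1\}$ denote its top $k$ elements. Since $|A|\leq a+2$ and $ak\geq M$, the total count satisfies $\sum_{\sigma\in A}|T_\sigma|=(a+2)k\geq M+2k=n+1>n$, so by pigeonhole there exist $v\in[n]$ and two distinct permutations $\sigma_1,\sigma_2\in A$ with $v\in T_{\sigma_1}\cap T_{\sigma_2}$; in particular both $\sigma_1(v)$ and $\sigma_2(v)$ are at least $n-k+1$.

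We then let $A':=A\setminus\{\sigma_1,\sigma_2\}$, so $|A'|\leq a$, and define
\[
B(v):=\{s\in[n]\setminus\{v\}:\sigma_1(s)<\sigma_1(v)\text{ and }\sigma_2(s)<\sigma_2(v)\}.
\]
Inclusion-exclusion yields $|B(v)|\geq(\sigma_1(v)-1)+(\sigma_2(v)-1)-(n-1)\geq 2(n-k+1)-n-1=n-2k+1=M$. We then restrict each $\sigma\in A'$ to the ground set $B(v)$ via the order-preserving bijection $B(v)\to[|B(v)|]$, producing a family of at most $a$ permutations in $S_{|B(v)|}$. By the defining property of $M=n(a,b;p)$, there exists a $p$-subset $P\subseteq B(v)$ whose $P$-dependent set with respect to the restricted family has size at least $p+b-1$; tracing through the bijection, this becomes $|\langle A';P\rangle\cap B(v)|\geq p+b-1$.

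Finally, we take $Q:=P\cup\{v\}$, so $|Q|=p+1$. Since $P\subseteq B(v)$, the element $v$ attains the largest $\sigma_j$-value in $Q$ for both $j=1,2$, hence $\sigma_1(\leq Q)\cap\sigma_2(\leq Q)=B(v)\cup\{v\}$. For each $\sigma\in A'$, the inclusion $P\subseteq Q$ gives $\sigma(\leq Q)\supseteq\sigma(\leq P)$, whence $\langle A';Q\rangle\supseteq\langle A';P\rangle\cup\{v\}$. Combining these observations,
\[
\langle A;Q\rangle\supseteq(\langle A';P\rangle\cap B(v))\cup\{v\},
\]
which has at least $p+b$ elements; therefore $d(A;Q)\geq b-1$ and hence $d_{p+1}(A)\geq b-1$.

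The main obstacle is calibrating the numerical parameters so that the pigeonhole inequality $(a+2)k>n$ and the inclusion-exclusion bound $|B(v)|\geq M$ are simultaneously achievable at $n=M+2k-1$; this dictates the choice $k=\lceil M/a\rceil$ appearing in the statement. Once a suitable triple $(v,\sigma_1,\sigma_2)$ is secured, the restriction and lifting steps are routine bookkeeping, in essentially the same spirit as Lemma~\ref{lem:relabel}.
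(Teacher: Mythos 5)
Your proof is correct and follows essentially the same route as the paper's: a pigeonhole count over the top $\lceil n(a,b;p)/a\rceil$ positions of the $a+2$ permutations produces an element that is high in two of them, those two permutations are discarded, the inductive hypothesis is applied to the remaining ones restricted to a set of elements lying below that element in both discarded permutations, and the element is appended to the resulting $p$-set. The only cosmetic differences are that you restrict to the common down-set $B(v)$ (bounded by inclusion--exclusion) where the paper restricts to the complement of the union $V$ of top elements, and that your pigeonhole step $(a+2)k\geq n+1$ really requires $n=M+2k-1$ exactly rather than $n\geq M+2k-1$ --- which is all the stated bound needs, given the monotonicity in $n$ of the defining property that both arguments implicitly use.
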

\begin{proof}
Let \(n=n(a,b;p)\) and \(\Delta=\left\lceil\frac{n(a,b;p)}{a}\right\rceil\). Suppose \(A\subseteq S_{n+2\Delta-1}\) is a multiset with \(|A|=a+2\), say \(A=\{\sigma_0,\sigma_1,\cdots,\sigma_{a+1}\}\).
Considering \(\sigma_i^{-1}(j)\) for \(0\leq i\leq a + 1\) and \(n + \Delta\leq j\leq n+2\Delta-1\), we have \((a+2)\Delta>a\frac{n-1}{a}+2\Delta=n+2\Delta-1\), so there exist \(0\leq i\not=i'\leq a+1\) and \(n+\Delta\leq j, j'\leq n+2\Delta-1\) such that \(\sigma_{i}^{-1}(j)=\sigma_{i'}^{-1}(j')=k\).
Without loss of generality, we may assume \(i=a\) and \(i'=a+1\).

Let \(V=\{\sigma_{a}^{-1}(j),\sigma_{a+1}^{-1}(j)\mid n+\Delta\leq j\leq n+2\Delta-1\}\).
Then we have \(k\in V\) and \(|V|\leq 2\Delta-1\).
Since $n+2\Delta-1-|V| \geq n$, for \(A':=(A\setminus\{\sigma_a,\sigma_{a+1}\})/V\subseteq S_{n+2\Delta-1-|V|}\), by the definition of $n(a,b;p)$, there exists \(P\in\binom{[n+2\Delta-1-|V|]}{p}\) such that \(d( A'; P)\geq b-1\).

By a repeated application of Lemma~\ref{lem:relabel}, we have \(\langle A';P\rangle^{V+}\subseteq\langle A\setminus\{\sigma_a,\sigma_{a+1}\};P^{V+}\rangle\).
Also, since \(k\in V\), we have \(k\not\in \langle A';P\rangle^{V+}\).
From the definition of \(V\), for every \(v\in[n+2\Delta-1-|V|]\), \(\sigma_a(v^{V+}),\sigma_{a+1}(v^{V+})<n+\Delta\leq\sigma_a(k),\sigma_{a+1}(k)\), so \(\langle A';P\rangle^{V+}\prec_{\sigma_a} k\) and \(\langle A';P\rangle^{V+}\prec_{\sigma_{a+1}} k\).
Thus we have
\[
    \langle A';P\rangle^{V+}\mathbin{\mathaccent\cdot\cup} \{k\}
    \subseteq\langle A\setminus\{\sigma_a,\sigma_{a+1}\};P^{V+}\mathbin{\mathaccent\cdot\cup}\{k\}\rangle\cap \langle\{\sigma_a, \sigma_{a+1}\};\{k\}\rangle
    \subseteq \langle A;P^{V+}\mathbin{\mathaccent\cdot\cup}\{k\}\rangle,
\]
which proves
\begin{align*}
d_{p+1}(A) & \geq d(A;P^{V+}\mathbin{\mathaccent\cdot\cup}\{k\})
\\& \geq|\langle A';P\rangle^{V+}\mathbin{\mathaccent\cdot\cup} \{k\}|-|P^{V+}\mathbin{\mathaccent\cdot\cup}\{k\}| \\&=(|\langle A';P\rangle| + 1) - (|P| + 1)
\\&= d(A';P)\geq b-1.
\end{align*}
This completes the proof.
\end{proof}

\begin{corollary}\label{cor:p-dep}
    Given integers \(a\geq 1,b\geq 2\), we have $n(a+2p-4,b;p)\in O(p)$
\end{corollary}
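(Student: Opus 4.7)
The plan is to iterate Theorem~\ref{thm:p-dep} starting from the finite base case supplied by Theorem~\ref{thm:f(n,a,2)}. Fix $a \geq 1$ and $b \geq 2$ and define $T_p := n(a + 2(p-2), b; p)$ for $p \geq 2$, so that $T_2 = n(a, b; 2)$ is a finite constant depending only on $a$ and $b$. Applying Theorem~\ref{thm:p-dep} with its first parameter set to $a + 2(p-2)$ yields the recurrence
\[
    T_{p+1} \leq T_p + 2\left\lceil \frac{T_p}{a + 2p - 4} \right\rceil - 1.
\]

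I would then prove by induction on $p \geq 2$ that $T_p \leq C(a + 2p - 4)$, where $C := \lceil T_2/a \rceil$ is a fixed positive integer. The base case $p = 2$ is immediate from the definition of $C$. For the inductive step, the hypothesis gives $T_p/(a + 2p - 4) \leq C$; since $C$ is an integer, this upgrades to the sharp bound $\lceil T_p/(a + 2p - 4) \rceil \leq C$. Substituting into the recurrence,
\[
    T_{p+1} \leq C(a + 2p - 4) + 2C - 1 = C(a + 2p - 2) - 1 < C(a + 2(p+1) - 4),
\]
which closes the induction and gives $T_p \leq C(a + 2p - 4) = O(p)$, as required.

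The main subtlety I expect lies precisely in treating the ceiling correctly. If one applied the crude bound $\lceil x \rceil \leq x + 1$, the recurrence would degrade to $T_{p+1} \leq T_p \cdot (a + 2p - 2)/(a + 2p - 4) + 1$, whose telescoping produces a harmonic-sum contribution and only yields $T_p = O(p \log p)$. Selecting $C$ to be an integer is exactly what preserves the tight relation $\lceil T_p/(a + 2p - 4) \rceil \leq C$ at every step and avoids this logarithmic loss, so the delicate point is the choice of the inductive ansatz rather than any combinatorial complication.
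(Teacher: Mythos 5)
Your proof is correct and follows essentially the same route as the paper: both arguments iterate Theorem~\ref{thm:p-dep} from the base case $n(a,b;2)$ and hinge on the observation that the ceiling term never exceeds its initial value $\left\lceil n(a,b;2)/a\right\rceil$, so each step adds only a fixed constant. The paper verifies this invariant by the direct computation $\frac{n+2\Delta-1}{a+2}<\Delta$, while you package the same fact as the inductive ansatz $T_p\leq C(a+2p-4)$; the two are equivalent.
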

\begin{proof}
If \(\Delta=\left\lceil\frac{n}{a}\right\rceil\), then we have \[
\frac{n+2\Delta-1}{a+2}\leq\frac{a\Delta+2\Delta-1}{a+2}<\Delta,
\]
which implies
\[
n(a+2p-4,b;p)\leq n(a,b;2) + \left(2\left\lceil\frac{n(a,b;2)}{a}\right\rceil-1\right)(p-2)\in O(p),
\]
as desired.
\end{proof}
\begin{proof}[Proof of Theorem~\ref{thm:main3}]
    This is a direct application of Lemma~\ref{lem:main} and Corollary~\ref{cor:p-dep} by setting $a=4$ and $b=2$.
    Note that $n(4,2;2)=n(4,2)$ exists by Theorem~\ref{thm:f(n,a,2)}.
\end{proof}

\bibliographystyle{acm}
\bibliography{reference.bib}

\begin{thebibliography}{1}

\bibitem{ALS17}
{\sc Amenta, N., De~Loera, J.~A., and Sober\'on, P.}
\newblock Helly's theorem: new variations and applications.
\newblock In {\em Algebraic and geometric methods in discrete mathematics}, vol.~685 of {\em Contemp. Math.} Amer. Math. Soc., Providence, RI, 2017, pp.~55--95.

\bibitem{BCH+24}
{\sc Benitez, M., Chen, S., Han, T., Jeffs, R.~A., Paguyo, K., and Zhou, K.~A.}
\newblock Realizing convex codes with axis-parallel boxes.
\newblock {\em Involve 17}, 4 (2024), 633--649.

\bibitem{Eck88}
{\sc Eckhoff, J.}
\newblock Intersection properties of boxes. i . an upper-bound theorem.
\newblock {\em Israel Journal of Mathematics 62}, 3 (1988), 283–301.

\bibitem{ES24+}
{\sc Edwards, T., and Soberón, P.}
\newblock Extensions of discrete helly theorems for boxes.
\newblock {\em preprint\/} (2024), arXiv:2404.14308.

\bibitem{Hal08}
{\sc Halman, N.}
\newblock Discrete and lexicographic helly-type theorems.
\newblock {\em Discrete \& Computational Geometry 39}, 4 (2008), 633–649.

\bibitem{Kal84}
{\sc Kalai, G.}
\newblock Intersection patterns of convex sets.
\newblock {\em Israel Journal of Mathematics 48}, 2-3 (1984), 161–174.

\end{thebibliography}

\end{document}